\DeclarePairedDelimiter{\norm}{\lVert}{\rVert}
\newcommand{\invnb}[1]{#1^{-1}}
\newtheorem{theorem}{Theorem}
\newtheorem{lemma}{Lemma}
\newtheorem{definition}{Definition}
\newcommand\extrafootertext[1]{%
    \bgroup
    \renewcommand\thefootnote{\fnsymbol{footnote}}%
    \renewcommand\thempfootnote{\fnsymbol{mpfootnote}}%
    \footnotetext[0]{#1}%
    \egroup
}
\begin{document}
%
\title{Conditions for Digit Stability in Iterative Methods Using the Redundant Number Representation}

\author{\IEEEauthorblockN{Ian McInerney}
\IEEEauthorblockA{\textit{Department of Mathematics, The University of Manchester}\\
Manchester, UK\\
Email: ian.mcinerney@manchester.ac.uk}}


\maketitle

\begin{abstract}
Iterative methods play an important role in science and engineering applications, with uses ranging from linear system solvers in finite element methods to optimization solvers in model predictive control.
 Recently, a new computational strategy for iterative methods called ARCHITECT was proposed by Li et al.\ in \cite{liARCHITECTArbitraryPrecisionHardware2020} that uses the redundant number representation to create ``stable digits'' in the Most-significant Digits (MSDs) of an iterate, allowing the future iterations to assume the stable MSDs have not changed their value, eliminating the need to recompute them.
 In this work, we present a theoretical analysis of how these ``stable digits'' arise in iterative methods by showing that a Fej\'er monotone sequence in the redundant number representation can develop stable MSDs in the elements of the sequence as the sequence grows in length.
 This property of Fej\'er monotone sequences allows us to expand the class of iterative methods known to have MSD stability when using the redundant number representation to include any fixed-point iteration of a contractive Lipschitz continuous function.
 We then show that this allows for the theoretical guarantee of digit stability not just in the Jacobi method that was previously analyzed by Li et al.\ in \cite{liDigitStabilityInference2021}, but also in other commonly used methods such as Newton's method.
\end{abstract}

\begin{IEEEkeywords}
    Digit stability, redundant number representation, fixed-point iterations.
\end{IEEEkeywords}


\extrafootertext{For the purpose of open access, the author has applied a Creative Commons Attribution (CC BY) licence to any Author Accepted Manuscript version arising.}

\section{Introduction}

The redundant number system plays a key role in computer arithmetic, with uses ranging from carry-free addition, multiplication and division algorithms \cite{ercegovacDigitalArithmetic2004} to Most-significant Digit (MSD) first operations such as Online arithmetic \cite{ercegovacOnlineArithmeticDSP1989} or the E-method for evaluating polynomials \cite{ercegovacGeneralHardwareOrientedMethod1977}.
 Recent work has extended MSD first arithmetic to the implementation of iterative methods, with the ARCHITECT framework proposed by Li et al.\ in \cite{liARCHITECTArbitraryPrecisionHardware2020} showing large speed-ups in FPGA implementations of the Jacobi method by exploiting the ideas of ``don't-change'' stable MSDs and ``don't-care'' Least-significant Digits (LSDs) to reduce the number of total digits computed in each iteration.
 Prior work to analyze and exploit the stable MSDs, which are the MSDs that don't change their digit value in any future iteration, has been algorithm-specific, with works such as \cite{liDigitStabilityInference2021} and \cite{ercegovacGeneralHardwareOrientedMethod1977} starting with a specific algorithm and then showing that its iterate sequence has MSD stability.

In this paper, we instead propose to reverse the order of the analysis by finding sequences of numbers in the redundant number system with stable MSDs, and then finding iterative methods that have those sequences as iterates.
 We start by showing that a Fej\'er monotone sequence can have stable MSDs when represented using redundant numbers, and we then use that result to show that any iterative method whose iterates are a Fej\'er monotone sequence (such as fixed-point iterations of contractive Lipschitz continuous functions) can have digit stability.
 This allows us to derive theoretical guarantees for the existence of stable MSDs in the iterate sequence of both the Jacobi method (which was previously analyzed in \cite{liDigitStabilityInference2021}), and Newton's method (which was only experimentally shown to have stable digits in \cite{liARCHITECTArbitraryPrecisionHardware2020})

\section{Background}

\subsection{Redundant Number Representation}

A redundant number representation is one where a real number can be represented by multiple different digit patterns, with a commonly used representation being the signed-digit representation \cite{avizienisSignedDigitNumbeRepresentations1961}, where the digits of a radix $r$ number using a symmetric signed-digit representation can take any value from the set
 \begin{equation*}
     S \coloneqq \{-\gamma, \dots, -1, 0, 1, \dots, \gamma \},
 \end{equation*}
 where $\sfrac{r}{2} \leq \gamma \leq r-1$.
 The choice of $\gamma$ determines the level of redundancy in the representation, with the choice $\gamma = r-1$ known as a \textit{maximally redundant} representation.

A key difference of the symmetric maximally redundant number representation compared to the standard number representation is in the real numbers that can be represented by just appending/changing the least significant digits of a number, a property known as the \textit{representation interval}.
 In the standard representation, the representation interval is single-sided, with changes in the least-significant digits only able to increase the value of the real number being represented.
 However, a redundant representation introduces a dual-sided representation interval that is able to both increase and decrease the value of the represented real number by simply adding/changing least-significant digits, with Li et al.\ \cite{liDigitStabilityInference2021} deriving the representation interval for a symmetric maximally redundant representation, shown here in Lemma~\ref{lem:redundantInterval}.
\begin{lemma}[Representation interval \cite{liDigitStabilityInference2021}]
    \label{lem:redundantInterval}
	Let $x$ be a $D$-digit number in the symmetric maximally redundant signed-digit redundant number system with radix $r$.
	If additional digits are appended to $x$ to form a new number, $\tilde{x}$, then
    \begin{equation*}
        \tilde{x} \in \left(x - r^{-D},~x + r^{-D}\right).
    \end{equation*}
\end{lemma}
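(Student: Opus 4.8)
The plan is to write $\tilde{x}$ explicitly as the original value $x$ perturbed by the contribution of the newly appended digits, and then to bound that contribution by summing a geometric series of maximal-magnitude digits. First I would fix notation: since only the weight of the least-significant digit of $x$ matters, I take $x = \sum_{i=1}^{D} d_i r^{-i}$ so that its trailing digit $d_D$ carries weight $r^{-D}$, and I write the new number as $\tilde{x} = x + \Delta$ where $\Delta = \sum_{j=D+1}^{D+N} d_j r^{-j}$ collects the $N$ appended digits. Because the representation is maximally redundant, the text gives $\gamma = r-1$, so every appended digit satisfies $\lvert d_j \rvert \le r-1$.

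Next I would bound $\lvert \Delta \rvert$. Applying the triangle inequality and then the digit bound yields $\lvert \Delta \rvert \le (r-1)\sum_{j=D+1}^{D+N} r^{-j}$, and evaluating this finite geometric sum gives $(r-1)\,r^{-(D+1)}\frac{1 - r^{-N}}{1 - r^{-1}} = r^{-D}\bigl(1 - r^{-N}\bigr)$. The key observation is that this bound is strictly less than $r^{-D}$ for every finite $N$, which is precisely what forces the open interval: the worst case, in which every appended digit equals $\pm(r-1)$, still falls short of $r^{-D}$ by the term $r^{-D-N}$. Combining, $\lvert \tilde{x} - x\rvert < r^{-D}$, which is exactly the claim $\tilde{x} \in (x - r^{-D},\, x + r^{-D})$.

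The step I expect to be the main obstacle is justifying the strictness of the inequality rather than the arithmetic of the geometric sum. If one imagined appending infinitely many copies of the digit $r-1$, the series would converge to exactly $r^{-D}$, so that the perturbation would reach the boundary; this corresponds to the alternate representation obtained by incrementing $d_D$. The open interval is therefore correct only because ``appending digits'' means appending a finite block, and I would make this finiteness assumption explicit so that the partial sum $r^{-D}(1 - r^{-N})$ never attains its supremum. A secondary point worth stating is that the same bound holds symmetrically for decreases as well as increases, which is the dual-sided feature distinguishing the redundant system from the standard one; this follows immediately because the digit set $S$ is symmetric about zero, so the lower and upper bounds on $\Delta$ are negatives of one another.
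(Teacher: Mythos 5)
The paper does not prove this lemma at all --- it is imported verbatim from Li et al.\ \cite{liDigitStabilityInference2021} with only a citation --- so there is no in-paper argument to compare against. Your proof is correct and is the standard derivation one would expect: writing $\tilde{x} = x + \Delta$ with $\abs{\Delta} \leq (r-1)\sum_{j=D+1}^{D+N} r^{-j} = r^{-D}\left(1 - r^{-N}\right) < r^{-D}$ is exactly right, and the symmetry of the digit set correctly gives the two-sided bound. Your remark about strictness is the one point that genuinely needs care: the open interval holds only because a finite block of digits is appended, since the infinite all-maximal-digit tail attains the endpoint $x \pm r^{-D}$ exactly; making that finiteness explicit, as you do, is the right call and is consistent with how the lemma is used later in the paper (Theorem~\ref{thm:fejerRedundancy} needs the target $x$ to lie in the \emph{open} representation interval, which is guaranteed there by the strict inequality~\eqref{eq:fejer:Dcondition}).
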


\subsection{Fej\'er Monotonicity and Nonexpansive Operators}

Fej\'er monotone sequences play an important role in the analysis and understanding of iterative methods, especially in fields involving monotone operators such as convex optimization \cite{bauschkeConvexAnalysisMonotone2011}.
 At its core, a Fej\'er monotone sequence is one where the distance between the elements of the sequence and a given set is not increasing as the sequence grows in length, which we define formally in Definition~\ref{def:fejermonotonicity}.
\begin{definition}[Fej\'er monotonicity {\cite[\S 5.1]{bauschkeConvexAnalysisMonotone2011}}]
    \label{def:fejermonotonicity}
    Let $\mathcal{C}$ be a nonempty subset of a Hilbert space $\mathcal{H}$ and let $\left(x^{(n)}\right)_{n \in \mathbb{N}}$ be a sequence in $\mathcal{H}$. Then $\left(x^{(n)}\right)_{n \in \mathbb{N}}$ is \textit{Fej\'er monotone} with respect to $\mathcal{C}$ if
    \begin{equation}
        \label{eq:monotonesequence}
        \norm{x^{(n+1)} - x} \leq \norm{x^{(n)} - x} \quad (\forall x \in \mathcal{C})(\forall n \in \mathbb{N}).
    \end{equation}
\end{definition}

Closely related to the idea of Fej\'er monotone sequences is that of monotone operators, and we specifically focus on nonexpansive and contractive operators in this work, which we define in Definition~\ref{def:nonexpansiveoperator}.
\begin{definition}[Nonexpansive/contractive operators {\cite{ryuPrimerMonotoneOperator2016}}]
    \label{def:nonexpansiveoperator}
    Let $\mathcal{D}$ be a nonempty subset of the Hilbert space $\mathcal{H}$ and let $T(\cdot) : \mathcal{D} \to \mathcal{H}$ be Lipschitz continuous with Lipschitz constant $L$, i.e.\
    \begin{equation}
        \label{eq:lipschitzoperator}
        \norm{T(x) - T(y)} \leq L \norm{x - y} \quad (\forall x \in \mathcal{D}) (\forall y \in \mathcal{D}).
    \end{equation}
    Then
    \begin{itemize}
        \item if $L = 1$, $T(\cdot)$ is a \textit{nonexpansive} opertor, or
        \item if $L < 1$, $T(\cdot)$ is a \textit{contractive} operator.
    \end{itemize}
\end{definition}

Fej\'er monotone sequences and nonexpansive operators are very closely linked, with the iterate sequence for a fixed-point iteration of a nonexpansive operator generating a Fej\'er monotone sequence with respect to the fixed-point, as shown in Lemma~\ref{lem:operatorSequence}.

\begin{lemma}
    \label{lem:operatorSequence}
    Let $T(\cdot)$ be a nonexpansive/contractive operator (i.e.\ $L \leq 1$) and $\mathcal{F}$ be the nonempty set of fixed points of $T(\cdot)$, then the sequence of iterates $\left(x^{(n)}\right)_{n \in \mathbb{N}}$ of the iteration
    \begin{equation}
        \label{eq:lem:operatorrelation}
        x^{(n+1)} = T\left(x^{(n)}\right) \qquad (\forall n \in \mathbb{N})
    \end{equation}
    is Fej\'er monotone with respect to $\mathcal{F}$.
\end{lemma}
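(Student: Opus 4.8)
The plan is to verify the Fejér monotonicity inequality directly from the definitions. Let $x^\star \in \mathcal{F}$ be any fixed point of $T(\cdot)$, so that $T(x^\star) = x^\star$ by definition. First I would use the iteration relation \eqref{eq:lem:operatorrelation} to rewrite the distance at step $n+1$ as
\begin{equation*}
    \norm{x^{(n+1)} - x^\star} = \norm{T\bigl(x^{(n)}\bigr) - x^\star}.
\end{equation*}
Then I would substitute $x^\star = T(x^\star)$ into the right-hand side to obtain $\norm{T(x^{(n)}) - T(x^\star)}$, which is exactly the form to which the Lipschitz bound \eqref{eq:lipschitzoperator} from Definition~\ref{def:nonexpansiveoperator} applies.

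Next I would apply the Lipschitz property with Lipschitz constant $L$ to bound this by $L \norm{x^{(n)} - x^\star}$. Since the hypothesis gives $L \leq 1$, this immediately yields
\begin{equation*}
    \norm{x^{(n+1)} - x^\star} = \norm{T\bigl(x^{(n)}\bigr) - T(x^\star)} \leq L \norm{x^{(n)} - x^\star} \leq \norm{x^{(n)} - x^\star},
\end{equation*}
which is precisely the Fejér monotonicity condition \eqref{eq:monotonesequence} from Definition~\ref{def:fejermonotonicity}. The final step is to note that $x^\star \in \mathcal{F}$ and $n \in \mathbb{N}$ were arbitrary, so the inequality holds for all fixed points and all indices, completing the argument.

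This proof is essentially a short chain of substitution and inequality, so I do not anticipate a genuine technical obstacle. The only point requiring care is making sure the fixed point $x^\star$ lies in the domain $\mathcal{D}$ of $T(\cdot)$ so that the Lipschitz inequality is legitimately applicable, and that the iterates $x^{(n)}$ remain in $\mathcal{D}$; this is implicit in the statement that $T(\cdot)$ generates a well-defined iterate sequence and that $\mathcal{F}$ is nonempty. It is worth emphasizing that the result holds uniformly for the nonexpansive case ($L = 1$) and the contractive case ($L < 1$) together, since in both regimes $L \leq 1$ is all that the final inequality requires.
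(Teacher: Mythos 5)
Your proof is correct and follows essentially the same route as the paper: choose a fixed point, use $T(x^\star)=x^\star$ to put the Lipschitz bound from Definition~\ref{def:nonexpansiveoperator} in play, and conclude via $L\leq 1$. Your version is in fact slightly more careful than the paper's, which leaves the substitution $x^\star = T(x^\star)$ and the domain considerations implicit.
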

\begin{proof}
    This follows trivially from Definition~\ref{def:nonexpansiveoperator} by choosing $y \in \mathcal{F}$ and substituting~\eqref{eq:lem:operatorrelation} into~\eqref{def:nonexpansiveoperator}, giving
    \begin{equation*}
        \norm{x^{(n+1)} - y} \leq L \norm{x^{(n)} - y} \quad (\forall x \in \mathcal{C})(\forall n \in \mathbb{N}).
    \end{equation*}
    Since $L \leq 1$, this satisfies inequality~\eqref{eq:monotonesequence} and the sequence of iterates $\left(x^{(n)}\right)_{n \in \mathbb{N}}$ will be Fej\'er monotone.
\end{proof}

\section{Fej\'er Monotonicity of Redundant Numbers}
We now begin our analysis of the redundant number system by noting the similarity between the Fej\'er monotone sequences in Definition~\ref{def:fejermonotonicity} and the representation interval in Lemma~\ref{lem:redundantInterval}, which we turn into an existence guarantee for digit stability of Fej\'er monotone sequences in Theorem~\ref{thm:fejerRedundancy}.
 We will be carrying out our analysis using the infinity norm in Definitions~\ref{def:fejermonotonicity} and~\ref{def:nonexpansiveoperator}, since when examining digit stability, we want to look at the largest error between the iterates and their limit point.
\begin{theorem}
    \label{thm:fejerRedundancy}
    Let $\left(x^{(k)}\right)_{k \in \mathbb{N}}$ be a Fej\'er monotone sequence in the symmetric maximally redundant number representation of radix $r$ and $x$ be the only point in the set $\mathcal{C}$. If element $n$ of the sequence satisfies
    \begin{equation}
        \label{eq:fejer:Dcondition}
        \norm{x^{(n)} - x}_{\infty} < r^{-D},
    \end{equation}
    for some positive integer $D$, then there exists a representation of the sequence where the $D$ MSDs of all future elements with index $k$ satisfy
    \begin{equation}
        \label{eq:fejer:digitStabilityCondition}
        x^{(k)}_{i} = x^{(n)}_{i} \quad (\forall i \in \{ 1, \dots, D \}) (\forall k > n).
    \end{equation}
\end{theorem}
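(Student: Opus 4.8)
The plan is to first propagate the closeness hypothesis from index $n$ to every later index using Fej\'er monotonicity, and then to translate a bound on the distance between iterates into a statement about shared leading digits by reading the representation interval of Lemma~\ref{lem:redundantInterval} in the reverse direction. Since $\mathcal{C} = \{x\}$ is a singleton, Definition~\ref{def:fejermonotonicity} applied along the sequence gives a chain of inequalities that I would telescope to obtain $\norm{x^{(k)} - x}_{\infty} \leq \norm{x^{(n)} - x}_{\infty}$ for every $k > n$; combined with~\eqref{eq:fejer:Dcondition} this yields
\begin{equation*}
    \norm{x^{(k)} - x}_{\infty} < r^{-D} \qquad (\forall k \geq n).
\end{equation*}
Because the relevant norm is the infinity norm, this is a per-coordinate statement: every coordinate of every iterate from index $n$ onward lies in the open interval of radius $r^{-D}$ centred at the corresponding coordinate of $x$.

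The second step is to fix, coordinate by coordinate, a single $D$-digit pattern for the leading digits and to argue that the trailing digits can always be chosen to represent the actual iterate value. Here I would use Lemma~\ref{lem:redundantInterval} as a covering statement: in the \emph{maximally} redundant system the values reachable by appending digits to a fixed $D$-digit prefix fill the entire dual-sided interval $(P - r^{-D},\, P + r^{-D})$, rather than the one-sided interval of the standard representation. Choosing $P$ to be the $D$ leading digits aligned with the common centre $x$, I would show that each coordinate value placed near $x$ by the first step also lies in the representation interval of $P$, so every $x^{(k)}$ with $k \geq n$ admits a representation whose first $D$ digits are exactly $P$. Taking this representation for $x^{(n)}$ fixes $x^{(n)}_i = P_i$, and reusing the same $P$ for every later iterate then gives~\eqref{eq:fejer:digitStabilityCondition}.

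The main obstacle I expect lies in the second step rather than the first: the telescoping argument is immediate, but establishing the \emph{reverse} of Lemma~\ref{lem:redundantInterval} --- that proximity to a common point forces a \emph{single} admissible leading-digit pattern shared simultaneously by all of $x^{(n)}, x^{(n+1)}, \dots$ --- is delicate. The difficulty is that two points each within $r^{-D}$ of $x$ may sit on opposite sides of $x$, so a naive triangle-inequality bound on $\norm{x^{(k)} - x^{(n)}}_{\infty}$ is too weak to place both inside one radius-$r^{-D}$ representation interval. I would therefore avoid bounding inter-iterate distances directly and instead anchor the prefix $P$ to the centre $x$ itself, exploiting the dual-sided nature of the redundant representation interval so that values on either side of $x$ remain expressible without disturbing the $D$ MSDs; carefully handling the open-interval boundary and the alignment of $P$ relative to $x$ is the crux of making the existence of the common representation rigorous.
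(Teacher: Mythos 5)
Your first step coincides with the paper's: Fej\'er monotonicity with the singleton $\mathcal{C} = \{x\}$ propagates the bound $\norm{x^{(k)} - x}_{\infty} < r^{-D}$ from $k = n$ to all $k \geq n$, and the infinity norm reduces everything to one coordinate at a time. Where you genuinely diverge is in the second step. The paper anchors the representation interval at the iterate itself: it asserts that $\norm{x^{(k)} - x} < r^{-D}$ places $x^{(k)}$ inside the representation interval $\left(x^{(n)} - r^{-D},\, x^{(n)} + r^{-D}\right)$ of $x^{(n)}$, so that every later iterate is obtained by appending digits to $x^{(n)}$. That inference is exactly the one you flag as suspect: closeness of $x^{(k)}$ and $x^{(n)}$ to the common point $x$ only bounds $\norm{x^{(k)} - x^{(n)}}$ by $2r^{-D}$, so the paper's containment claim does not follow from the inequalities it writes down. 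Your instinct to anchor the prefix at the limit $x$ rather than at $x^{(n)}$ is therefore a real improvement in how the difficulty is framed, and you correctly identify the two-sided oscillation around $x$ as the crux.

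However, your proposed resolution does not close the gap either. For the covering argument to work you need a single $D$-digit prefix $P$ whose representation interval $\left(P - r^{-D},\, P + r^{-D}\right)$ contains the whole interval $\left(x - r^{-D},\, x + r^{-D}\right)$ in which the iterates live; since both intervals have width $2 r^{-D}$, this forces $P = x$ exactly, i.e., it requires $x$ to be representable with $D$ digits. When it is not, one can place two admissible iterates on opposite sides of $x$ that share no common $D$-digit prefix: in radix $2$ with $D = 2$, take $x = 0.125$, $x^{(n)} = -0.0625$ and $x^{(n+1)} = 0.3125$; both are within $0.1875 < 2^{-2}$ of $x$ and the two-term sequence is Fej\'er monotone, yet every $2$-digit prefix whose representation interval contains $-0.0625$ has value in $\{-0.25, 0\}$ while every one containing $0.3125$ has value in $\{0.25, 0.5\}$. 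So the ``crux'' you name is not an expository nuisance but a genuine obstruction; closing it requires either tightening the hypothesis (for instance demanding $\norm{x^{(n)} - x}_{\infty}$ be small relative to the distance from $x$ to the nearest $D$-digit value, or strengthening the bound by a factor related to the overlap of adjacent representation intervals) or assuming $x$ is exactly $D$-digit representable. It is worth noting that the paper's own proof passes over this same point silently, so your proposal, while incomplete at precisely the step you yourself single out, is diagnostically sharper than the argument it is being compared against.
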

\begin{proof}
    Since we are using the infinity norm, we can without loss of generality focus on the 1-dimensional case (e.g.\ just the element in the vector $x^{(n)}$ that is the furthest from the corresponding element in $x$).
    From the infinity norm, we know that if $x^{(n)}$ satisfies condition~\eqref{eq:fejer:Dcondition}, then the point $x$ lies in the interval
    \begin{equation}
        \label{eq:fejer:xinterval}
        x \in \left(x^{(n)} - r^{-D},~x^{(n)} + r^{-D}\right).
    \end{equation}
    Since $x^{(n)}$ is in the redundant number representation, we see that the interval~\eqref{eq:fejer:xinterval} exactly matches the representation interval given in Lemma~\ref{lem:redundantInterval}, meaning that the redundant representation of $x^{(n)}$ can represent $x$ by only appending digits and not changing the first $D$ MSDs.
    
    We know that since $x^{(n)}$ and $x^{(n+1)}$ belong to a Fej\'er monotone sequence, they will satisfy the inequality
    \begin{equation}
        \label{eq:fejer:monotonerelation}
        \norm{x^{(n+1)} - x} \leq \norm{x^{(n)} - x} < r^{-D},
    \end{equation}
    which means that $x^{(n+1)}$ is also contained within the representation interval of $x^{(n)}$ given by \eqref{eq:fejer:xinterval}, meaning $x^{(n+1)}$ can be represented by only appending new digits to $x^{(n)}$.
    The Fej\'er montonicity means that relation~\eqref{eq:fejer:monotonerelation} will also hold for all iterates after element $n$, meaning we can represent all future elements by simply appending new digits to $x^{(n)}$, leading to the digit stability condition in~\eqref{eq:fejer:digitStabilityCondition}.
\end{proof}

Note that the proof for Theorem~\ref{thm:fejerRedundancy} is simpler than the equivalent proof for Lemma~3 by Li et al.\ in \cite{liDigitStabilityInference2021}, since we can make use of the Fej\'er montonicity of the sequence instead of having to use algorithm-specific information about how the iterates are generated and the intervals will change.

The digit stability result presented in Theorem~\ref{thm:fejerRedundancy} is a direct result of having the redundancy in the number representation.
 In the standard representation, the representation intervals will be single-sided and encompass only
 \begin{equation*}
     \tilde{x} \in \left[ x, x + r^{-D} \right),
 \end{equation*}
 while the relation between the elements of a  Fej\'er monotone sequence uses normwise relations that are double-sided.
 This means that when a Fej\'er monotone sequence converges to a limit point with a finite number of decimal places, any oscillation around the limit point during the sequence will require changing the MSDs in the standard representation, but the redundant representation is able to accommodate for these oscillations using only the future LSDs.
 
An example of this oscillation can be seen in the Fej\'er monotone sequence with a limit point of $0.5$ given in Table~\ref{tab:fejerExample}.
 The sequence oscillates around $0.5$, and the standard (base 10 and binary) representations cannot stabilize the MSDs in the sequence, while the symmetric maximally redundant representation in radix-2 can stabilize the MSDs in the presence of the oscillation.
 This example also showcases the requirement that inequality~\eqref{eq:fejer:Dcondition} in Theorem~\ref{thm:fejerRedundancy} must be satisfied to generate stable digits.
 While we have two stable digits starting at element two, we do not generate the next stable digit until after element four, since the distance to 0.5 for elements two and three are both strictly greater than $2^{-2}$.
 
The digit sequences for values of a Fej\'er monotone sequence in the redundant representation are not unique though, especially in the symmetric maximally redundant representation we use in this work.
 In a symmetric maximally redundant representation, a value can always be represented using its original (non-redundant) digit sequence as well as a sequence utilizing the redundant digits.
 This can also be seen in the example sequence in Table~\ref{tab:fejerExample}, since the column listing the binary representation utilizes a strict subset of the digits used in the redundant column (i.e.\ using only $\{0, 1\}$).
 Therefore, Theorem~\ref{thm:fejerRedundancy} simply says that such a sequence using the redundant representation exists, not that the Fej\'er monotone sequence will always be represented by it.
 
\begin{table}[t]
 \centering
 \caption{Sample Fej\'er monotone sequence that converges to $0.5$. Redundant representation uses radix-2 with the symmetric maximal digit set $\{-1, 0, 1\}$.}
 \label{tab:fejerExample}
 \begin{tabular}{c|l|l|l|l}
      Element \# & Standard & Binary & Redundant & $\norm{x_{i} - 0.5}$ \\
      \hline
      1 & $1.0$      & $1. 0 0 0 0 0 0$ & $1. 0 0 0$ & $0.5$\\
      2 & $0.125$    & $0. 0 0 1 0 0 0$ & $1. \bar{1} \bar{1} \bar{1}$ & $0.375$\\
      3 & $0.75$     & $0. 1 1 0 0 0 0$ & $1. \bar{1} 1 0$ & $0.25$\\
      4 & $0.375$    & $0. 0 1 1 0 0 0$ & $1. \bar{1} 0 \bar{1}$ & $0.125$\\
      5 & $0.5625$   & $0. 1 0 0 1 0 0$ & $1. \bar{1} 0 0 1$ & $0.0625$\\
      6 & $0.46875$  & $0. 0 1 1 1 1 0$ & $1. \bar{1} 0 0 0 \bar{1}$ & $0.03125$\\
      7 & $0.515625$ & $0. 1 0 0 0 0 1$ & $1. \bar{1} 0 0 0 0 1$ & $0.015625$\\
      $\vdots$ & $\vdots$ & $\vdots$ & $\vdots$ & $\vdots$
 \end{tabular}
\end{table}

\section{MSD Stability in Iterative Methods}

Now that we have shown that a Fej\'er monotone sequence can have digit stability in the redundant representation, we can easily say that any fixed-point iteration in the redundant number representation that generates a sequence of iterates that are Fej\'er monotone can have digit stability of its iterates when they satisfy condition~\eqref{eq:fejer:Dcondition}.

\begin{theorem}
    \label{thm:fejerIteration}
    Let $f(\cdot)$ be a function that has a fixed-point of $x^{*}$ and is used in the fixed-point iteration $x^{(k+1)} = f(x^{(k)})$ with the redundant number representation to generate the Fej\'er monotone sequence of iterates $\left(x^{(k)}\right)_{k \in \mathbb{N}}$.
    If element $n$ of the sequence generated by $f(\cdot)$ satisfies
    \begin{equation*}
        \norm{x^{(n)} - x^{*}}_{\infty} < r^{-D},
    \end{equation*}
    for some positive integer $D$, then there exists a representation of the iterate sequence where the $D$ MSDs of all future elements with index $k$ satisfy
    \begin{equation*}
        x^{(k)}_{i} = x^{(n)}_{i} \quad (\forall i \in \{ 1, \dots, D \}) (\forall k > n).
    \end{equation*}
\end{theorem}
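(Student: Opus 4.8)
The plan is to treat Theorem~\ref{thm:fejerIteration} as a direct specialization of Theorem~\ref{thm:fejerRedundancy} to the setting of fixed-point iterations; the only conceptual work is to match the objects of the fixed-point framework with those of the Fej\'er-monotone-sequence framework, after which the digit-stability conclusion transfers verbatim.

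First, I would take the set $\mathcal{C}$ of Definition~\ref{def:fejermonotonicity} to be the singleton $\{x^{*}\}$ consisting of the fixed point of $f(\cdot)$, so that $x = x^{*}$ becomes the unique point of $\mathcal{C}$, exactly as Theorem~\ref{thm:fejerRedundancy} requires. Because the iterate sequence is assumed Fej\'er monotone, it is in particular Fej\'er monotone with respect to this singleton, so that $\norm{x^{(k+1)} - x^{*}}_{\infty} \leq \norm{x^{(k)} - x^{*}}_{\infty}$ holds for every $k$.

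Next, I would note that the triggering hypothesis $\norm{x^{(n)} - x^{*}}_{\infty} < r^{-D}$ is literally condition~\eqref{eq:fejer:Dcondition} of Theorem~\ref{thm:fejerRedundancy} under the identification $x = x^{*}$. With both the Fej\'er-monotonicity hypothesis and the distance condition satisfied, I would invoke Theorem~\ref{thm:fejerRedundancy} directly to obtain a representation of the sequence in which the $D$ MSDs of every element with index $k > n$ coincide with those of $x^{(n)}$, which is precisely the desired conclusion.

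The point to be careful about, rather than a genuine obstacle, is the provenance of the Fej\'er-monotonicity hypothesis in applications. If one starts instead from the weaker assumption that $f(\cdot)$ is a contractive (or nonexpansive) Lipschitz continuous operator, then one would first apply Lemma~\ref{lem:operatorSequence} to the iteration $x^{(k+1)} = f(x^{(k)})$ to certify that the generated sequence is Fej\'er monotone with respect to the fixed-point set, and then restrict attention to the single fixed point $x^{*}$. Under either reading the argument is immediate once this identification is in place, so I expect no computational difficulty: the statement is essentially a corollary that repackages Theorem~\ref{thm:fejerRedundancy} in the language of iterative methods.
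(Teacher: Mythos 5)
Your proposal is correct and matches the paper's own proof, which simply states that the theorem ``follows directly from'' Theorem~\ref{thm:fejerRedundancy}; you have just made the identification $\mathcal{C} = \{x^{*}\}$ explicit, which is exactly the intended reading. No gaps.
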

\begin{proof}
    This follows directly from the result in Theorem~\ref{thm:fejerRedundancy}.
\end{proof}

While Theorem~\ref{thm:fejerIteration} is a nice and simple result, we can now also derive more specific results for classes of iterative methods, such as Lipschitz continuous methods in Theorem~\ref{thm:lipschitzIteration}.
\begin{theorem}
    \label{thm:lipschitzIteration}
    Let $f(\cdot)$ be a function that is locally Lipschitz inside the set $\chi$ with Lipschitz constant $L_{f} < 1$.
    If $f(\cdot)$ has a fixed-point of $x^{*} \in \chi$ and is used in the fixed-point iteration $x^{(k+1)} = f(x^{(k)})$ with the redundant number representation to generate the sequence of iterates $\left(x^{(k)}\right)_{k \in \mathbb{N}} \in \chi$, then given a number of digits $D \geq 1$ there exists an element index $\hat{k} \geq 0$ such that
    \begin{equation}
        \label{eq:msd:digstability}
        x^{(k)}_{i} = x^{(\hat{k})}_{i} \quad (\forall i \in \{ 1, \dots, D \}) (\forall k > \hat{k}).
    \end{equation}
\end{theorem}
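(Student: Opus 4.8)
The plan is to reduce the claim to Theorem~\ref{thm:fejerIteration} by verifying its two standing requirements for this iterate sequence: that the sequence is Fej\'er monotone with respect to $\{x^*\}$, and that for the prescribed precision $r^{-D}$ some element of the sequence falls within that distance of $x^*$. Once both hold, Theorem~\ref{thm:fejerIteration} supplies the digit-stability conclusion~\eqref{eq:msd:digstability} directly.

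First I would establish Fej\'er monotonicity. Because every iterate $x^{(k)}$ lies in $\chi$ and $x^* \in \chi$, the Lipschitz estimate for $f(\cdot)$ with constant $L_{f} < 1$ applies to each consecutive pair $(x^{(k)}, x^*)$; thus $f$ acts as a contractive operator in the sense of Definition~\ref{def:nonexpansiveoperator} along the orbit. Invoking Lemma~\ref{lem:operatorSequence} (with $L = L_{f} \leq 1$) then shows the iterates are Fej\'er monotone with respect to the fixed-point set, which for a contraction is the singleton $\{x^*\}$, matching the single-point hypothesis of Theorem~\ref{thm:fejerRedundancy}.

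Next I would establish convergence in the infinity norm. Applying the contraction bound to the distance to $x^*$ gives
\[
    \norm{x^{(k+1)} - x^*}_{\infty} = \norm{f(x^{(k)}) - f(x^*)}_{\infty} \leq L_{f} \norm{x^{(k)} - x^*}_{\infty},
\]
and iterating this recursion yields
\[
    \norm{x^{(k)} - x^*}_{\infty} \leq L_{f}^{\,k}\, \norm{x^{(0)} - x^*}_{\infty}.
\]
Since $0 \leq L_{f} < 1$, the right-hand side tends to $0$, so $\norm{x^{(k)} - x^*}_{\infty} \to 0$. Hence for the given $D \geq 1$ there is an index $\hat{k}$ with $\norm{x^{(\hat{k})} - x^*}_{\infty} < r^{-D}$. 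Taking $n = \hat{k}$ in Theorem~\ref{thm:fejerIteration} then delivers~\eqref{eq:msd:digstability}.

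The step I expect to require the most care is the first: the hypothesis ``locally Lipschitz inside $\chi$'' must be handled so that the single constant $L_{f}$ governs every pairwise distance used in the argument, in particular $\norm{f(x^{(k)}) - f(x^*)}_{\infty}$ along the entire orbit. This is exactly where the standing assumption that the whole iterate sequence remains in $\chi$, together with $x^* \in \chi$, becomes essential: it confines all the relevant point pairs to the region where the contraction estimate is valid, after which the convergence argument and the appeal to Theorem~\ref{thm:fejerIteration} are routine.
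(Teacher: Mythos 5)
Your proposal is correct and follows essentially the same route as the paper: contractivity gives Fej\'er monotonicity via Lemma~\ref{lem:operatorSequence}, a geometric contraction bound shows the residual eventually drops below $r^{-D}$, and Theorem~\ref{thm:fejerRedundancy} (via Theorem~\ref{thm:fejerIteration}) then yields~\eqref{eq:msd:digstability}. The only cosmetic difference is the convergence bound used: you iterate $\norm{x^{(k)} - x^{*}}_{\infty} \leq L_{f}^{k}\norm{x^{(0)} - x^{*}}_{\infty}$ directly, while the paper cites the a priori estimate $L^{k}\tfrac{1}{1-L}\norm{x^{(1)} - x^{(0)}}$; both serve the same purpose here.
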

\begin{proof}
    We begin by noting that since $f(\cdot)$ is locally Lipschitz inside $\chi$ with a Lipschitz constant $L_{f} < 1$, $f(\cdot)$ will be a contractive operator inside $\chi$ and Lemma~\ref{lem:operatorSequence} says that the iterates $\left(x^{(k)}\right)_{k \in \mathbb{N}}$ of $f$ will form a Fej\'er monotone sequence that stays in the set $\chi$.
    Since $L_{f}$ is strictly less than 1, the residuals between the iterates $x^{(k)}$ and the fixed point $x^{*}$ will be a decreasing sequence that can be bounded using a function of the Lipschitz constant and the difference between the two initial iterates \cite[Thm. 4.1]{suliIntroductionNumericalAnalysis2003}
    \begin{equation}
        \label{eq:msd:seqbound}
        \norm{x^{(k)} - x^{*}} \leq L^{k} \frac{1}{1 - L} \norm{x^{(1)} - x^{(0)}},
    \end{equation}
    with $\lim_{k \to \infty} L^{k} = 0$.
    This means that the residual between the iterates $x^{(k)}$ and $x^{*}$ will eventually be less than $r^{-D}$ for a given $D \geq 1$ and $r \geq 2$, satisfying the condition in Theorem~\ref{thm:fejerRedundancy} and allowing it to be used to create the digit stability result~\eqref{eq:msd:digstability}.
\end{proof}

A key point to notice in Theorems~\ref{thm:fejerIteration} and~\ref{thm:lipschitzIteration} is that they are concerned only with the \textit{existence} of an iterate sequence with stable MSDs, and do not comment on how such a sequence can be constructed.
 The construction of the sequence with stable MSDs will still be application/algorithm-specific, with the designer needing to ensure that the computations performed in the fixed-point iteration will preserve the MSD stability as they are performed, such as through the use of online arithmetic or the ARCHITECT system~\cite{liARCHITECTArbitraryPrecisionHardware2020}.

\section{Example Iterative Methods with MSD Stability}

Now that we have presented conditions for when the MSDs of iterates can be stable, we turn our attention to showing two example iterative methods where we can apply these conditions and derive a guarantee for digit stability.

\subsection{Stationary iterative linear solvers}

A common iterative method examined by prior work on redundant number systems (e.g.\ \cite{liDigitStabilityInference2021} and~\cite{ercegovacGeneralHardwareOrientedMethod1977}) is the stationary iterative method for solving linear systems of the form $Ax = b$, which has the fixed-point iteration
\begin{equation*}
    x^{(k+1)} = T\left(x^{(k)}\right) \coloneqq (I - \invnb{M} A)x^{(k)} + \invnb{M}b.
\end{equation*}
 Common forms of this method are the Jacobi method when $M$ is the diagonal of $A$, the Gauss-Seidel method when $M$ is the lower triangular portion of $A$, and the successive overrelaxation (SOR) method when $M = \invnb{\omega} D - \bar{L}$ \cite[Chap. 2]{Greenbaum1997}.
 To begin our analysis, we examine the difference between two successive function values
 \begin{multline*}
    \norm{T\left(x^{(k+1)}\right) - T\left(x^{(k)}\right)} = \\
     \norm{ (I - \invnb{M} A)x^{(k+1)} + \invnb{M}b - (I - \invnb{M} A)x^{(k)} - \invnb{M}b}.     
 \end{multline*}
 Simplifying this, we find that it is equal to
\begin{equation*}
     \norm{ (I - \invnb{M} A) (x^{(k+1)} - x^{(k)})},
 \end{equation*}
 and by applying the triangle inequality we find that
 \begin{equation*}
     \norm{T\left(x^{(k+1)}\right) - T\left(x^{(k)}\right)} \leq 
     \norm{I - \invnb{M}A} \norm{x^{(k+1)} - x^{(k)}}.
 \end{equation*}
 This inequality is equivalent to a Lipschitz condition for the stationary iterative solver, with the Lipschitz constant
 \begin{equation*}
     L = \norm{I - \invnb{M}A}.
 \end{equation*}
 
We now can apply Theorem~\ref{thm:lipschitzIteration} to this method to find out that the simple iterative linear solvers will have digit stability when using the redundant number system, provided the condition
 \begin{equation*}
     \norm{I - \invnb{M}A}_{\infty} < 1
 \end{equation*}
 is satisfied, which is the same condition derived by Li et al.\ in \cite{liDigitStabilityInference2021}.

\subsection{Newton's method}

Another popular iterative method utilized in many fields is Newton's method for finding the roots of the function $f(\cdot)$, which is given by the fixed-point iteration (for 1-dimension)
 \begin{equation}
    \label{eq:newtons}
    x^{(k+1)} = T\left(x^{(k)}\right) \coloneqq x^{(k)} - \frac{f(x^{(k)})}{f'(x^{(k)})},
 \end{equation}
 where $f'(\cdot)$ is the derivative of $f$.
 We can manipulate the Lipschitz continuous condition in Definition~\ref{def:nonexpansiveoperator} to be
 \begin{equation*}
     \frac{\norm{T(x^{(n+1)}) - T(x^{(n)})}}{\norm{x^{(n+1)}) - x^{(n)}}} \leq L,
 \end{equation*}
 which for the 1-dimensional case can be translated into a condition on the derivative of $T(\cdot)$ that $T'(x) \leq L$, meaning a contractive operator has $T'(x) < 1$ for all $x$ in its region of convergence \cite[Chap. 1]{suliIntroductionNumericalAnalysis2003}.

We then know for the Newton's method~\eqref{eq:newtons}, that its derivative will be
 \begin{equation*}
    T'\left(x\right) \coloneqq \frac{f(x) f''(x)}{\left( f'(x) \right)^2}.
 \end{equation*}
 Since Newton's method needs $T'(x) < 1$ (and therefore $L < 1$) for its convergence, we can use Theorem~\ref{thm:lipschitzIteration} to say that as the Newton's method converges, there exists a redundant number representation of its iterates with stable digits.

\section{Conclusion}

In this work, we examined the conditions needed for there to be MSD stability in sequences represented in the redundant number system.
 We did this through two key results: showing that a Fej\'er monotone sequence can have a redundant representation with MSD stability, and then using that result to show that an iterative method that generates a Fej\'er monotone sequence of iterates can have MSD stability when using the redundant number system.

These results only focused on the existence of these sequences, though, and did not discuss how to identify or create them.
 These are both key points in the successful use of these results in real-world applications, so future work should explore them in more detail.
 For instance, future work can explore if there is a redundant representation where a Fej\'er monotone sequence is guaranteed to have digit stability in all its representations (e.g.\ non-symmetric or non-maximal redundancy), and also how the representation is preserved during the computations in an iterative method (e.g.\ will the redundant sequence be preserved).
 Additionally, the results linking Lipschitz continuous functions with digit stable redundant sequences can be further exploited to try to predict the number of stable MSDs that will appear in each iteration, allowing for more general results than those previously presented in~\cite{liDigitStabilityInference2021}.

\section*{Acknowledgment}

We would like to thank Dr. He Li for reading and commenting on an early version of this manuscript.
This work was supported by Engineering and Physical Sciences Research Council grant EP/P020720/1.


\bibliography{main}

\end{document}